\newcommand{\lipnorm}[1]{\|#1\|_{\mathrm{Lip}^w_+}}
\DeclareMathOperator{\Var}{Var}
\newcommand{\LipWp}{\mathrm{Lip}^w_+}
\DeclareMathOperator{\Unif}{Unif}
\newtheorem*{main-thm}{Main Theorem}
\newtheorem{theorem}{Theorem}[section]
\newtheorem{lemma}[theorem]{Lemma}
\newtheorem{proposition}[theorem]{Proposition}
\newtheorem{corollary}[theorem]{Corollary}
\theoremstyle{definition}
\newtheorem{definition}[theorem]{Definition}
\theoremstyle{remark}
\newtheorem{remark}[theorem]{Remark}
\title[Spatial limit theorem for interval exchange transformations]
{Spatial limit theorem\\ for interval exchange transformations}
\author{Alexey Klimenko}
\address{\begin{flushleft} Alexey Klimenko\\
\upshape Steklov Mathematical Institute of Russian Academy of Sciences,\\Gubkina str. 8,  119991, Moscow, Russia;\\
and National Research University Higher School of Economics,\\ Usacheva str. 6, 119048,  Moscow, Russia \\
\texttt{klimenko@mi.ras.ru} \end{flushleft}}
\date{}
\begin{document}

 \begin{abstract}
We prove a spatial limit theorem for generic interval exchange transformations (IETs): for a generic IET the normalized ergodic sums of a sufficiently regular (e.g., Lipschitz) function have the same asymptotic behavior of distributions as the behaviour of ergodic integrals for a generic translation flow on a flat surface, described by A. Bufetov.
\end{abstract}

\maketitle

\section{Introduction}

Let $T\colon X\to X$ be a transformation of a probability space $(X,\mu)$ that conserves the measure $\mu$. For a function $\varphi\in L^1(X,\mu)$ consider its \emph{ergodic sums}
\begin{equation*}
S_n\varphi(x)=\sum_{k=0}^{n-1} \varphi(T^kx).
\end{equation*}
One can regard $S_n\varphi$ as a random variable on the probability space $(X,\mu)$, and we are interested in asymptotical behavior of the \emph{distributions} of
appropriately normalized random variables $S_n\varphi$ as $n\to\infty$. Following Dolgopyat and Sarig \cite{DolSar}, we refer to results of this type as \emph{spatial (distributional) limit theorems}.

Our main result states spatial limit theorem for a generic interval exchange transformation.
To state precisely what genericity means here, we need some notation for translation flows on flat surfaces, which are continuous-time counterparts of IETs. Here we follows Bufetov's paper \cite{Bufetov-Ann}.

In a moduli space of Abelian differentials let $\mathcal{M_\kappa}$ be a stratum of surfaces such that the orders of their singularities form a tuple $\kappa=(\kappa_1,\dots,\kappa_\sigma)$, endowed with a choice of a ``horizontal'' separatrix of one of the singularities. 
Let $\mathcal H$ be a connected component of $\mathcal M_k$ and let
$\mathbf{g}^s$ be the Teichm\"uller flow on $\mathcal H$. Finally, consider a $\mathbf{g}^s$-invariant ergodic probability measure $\mathbb P$ on $\mathcal H$.

For every surface $M\in \mathcal M_\kappa$ there is a unique decomposition of $M$ into Markov rectangles such that the bases of these rectangles belong to a segment~$I$ of the marked separatrix starting at the singular point at the end of this separatrix, the length of this segment is at least one, while the application of the Rauzy induction makes it less than one.
Therefore, the first return map to the segment $I$ for the vertical flow $h^+_t$ on $M$ yields an interval exchange transformation $T_{M}$. The number of segments for $T_M$ is equal to the number of rectangles, and hence is fixed. Moreover, permutations of segments for $T_{M}$, $M\in\mathcal H$ belong to the same Rauzy class almost surely. Indeed, $T_{M}$ and $T_{\mathbf{g}^sM}$ differ by a scaling and several applications of the Rauzy induction, hence their permutations belong to the same Rauzy class, so this Rauzy class is $\mathbf{g}$-invariant. But $\mathbb P$ is ergodic, so this Rauzy class should be the same almost surely.

Therefore, one can consider a set $\widehat{\mathcal{H}}=\{T_M, M\in\mathcal H\}$, and the measure $\mathbb P$ induces a measure $\widehat{\mathbb{P}}$ on $\widehat{\mathcal{H}}$.

A. Bufetov \cite{Bufetov-Ann} proved a limit theorem for $\mathbb{P}$-almost all translation froms provided some conditions on $\mathbb{P}$ hold.
Namely, for sufficiently regular function $\psi$ on $M$ outside of a finite-codimension ``degenerate'' subspace, the distributions of random variables
\begin{equation*}
\widehat{I}_t\psi=\frac{I_t\psi -\mathbb EI_t\psi}{\sqrt{\operatorname{Var}I_t\psi}},\quad\text{where}\quad I_t\psi(x)=\int_0^t \psi(h^+_s(x))\,ds,
\end{equation*}
have no limit as $t\to\infty$ but its set of limit points in the space of distributions can be described, see Section~\ref{sec:bufetov} for the detailed statement.

\begin{main-thm}
Assume that a $\mathbf{g}^s$-ergodic measure $\mathbb{P}$ on a connected component $\mathcal{H}$ of a stratum of Abelian differentials satisfy conditions of Bufetov's theorem. Then for $\widehat{\mathbb P}$-almost every $T\in\widehat{\mathcal{H}}$ and for a regular function $f$ on the segment $I$ where $T$ acts if $f$ lies outside of a finite-codimensional subspace, then the sequence of distributions of the random variables 
\begin{equation}
 \widehat{S}_n f=\frac{S_n f -\mathbb ES_n f}{\sqrt{\Var S_n f}}
\end{equation}
have the same limit behaviour as the behaviour of distributions of ergodic integral for some $M\in\mathcal{H}$ with $T_M=T$ and some function $\psi$ on $M$.
\end{main-thm}

The structure of the paper is the following. We discuss Bufetov's limit theorem for translation flows in the next section, then in Section~\ref{sec:sums-intls} we give the detailed statement of our Main Theorem, which is then proved in this section and the last Section~\ref{sec:unif-distr}.

\section{Bufetov's limit theorem for translation flows}
\label{sec:bufetov}

In this section we mostly follows \cite[Section 1]{Bufetov-Ann}.
 
A bounded measurable function $\varphi$ on a flat surface $M$ is called \emph{weakly Lipschitz} if there exists a constant $C>0$ such that for any $x\in M$ and any $t_1,t_2>0$ such that the rectangle
$\Pi(x,t_1,t_2)=\{h^+_t(h^-_s(x)): 0\le t\le t_1, 0\le s\le t_2\}$ is \emph{admissible}, i.e. does not contain singular points in its interior, we have
\begin{equation}\label{eq:w-lip}
\left|\int_0^{t_1}\varphi(h^+_t(x))\,dt-\int_0^{t_1}\varphi(h^+_t(h_{t_2}^-(x)))\,dt\right|\le C.
\end{equation}
The set $\LipWp(M)$ of all weakly Lipschitz functions is a normed space with the norm defined by
\begin{equation*}
\lipnorm{\varphi}=\sup_M |\varphi|+C_\varphi,
\end{equation*}
where $C_\varphi$ is the infimum of all $C$ satisfying \eqref{eq:w-lip}.

The main tool to describe behaviour of ergodic integrals for a translation flow is the following \emph{Bufetov cocycles}.

\begin{definition}
	A function $\Phi^+$ on arcs of the vertical flow $h_t^+$ on $M$ is called \emph{vertical Bufetov cocycle} if the following holds.\\
	1) $\Phi^+$ is additive: if $\Phi^+(x,t)$ denotes its value on the arc $h^+_{[0,t]}(x)=\{h^+_s(x):0\le s\le t\}$, then
	\begin{equation*}
		\Phi^+(x,t_1+t_2)=\Phi^+(x,t_1)+\Phi^+(h^+_{t_1}(x),t_2).
	\end{equation*}
	2) $\Phi^+$ is H\"older: there exists $t_0>0$, $\theta>0$ such that $|\Phi^+(x,t)|<t^\theta$ for all $x\in M$, $|t|\le t_0$.\\
	3) $\Phi^+$ is holonomy invariant: if $\Pi(x,t_1,t_2)$ is admissible, then $\Phi^+(x,t_1)=\Phi^+(h^-_{t_2}(x),t_1)$.\\
	The space of all such cocycles is denoted by $\mathfrak B^+=\mathfrak B^+(M)$. One can also define the symmetric class of horizontal Bufetov cocycles $\mathfrak B^-$.  
\end{definition}
The spaces $\mathfrak B^\pm(M)$ are isomorphic to the spaces of Kontsevich---Zorich cocycles\cite{KonZor, Zorich} of the surface~$M$. 
Similar functional objects also arose earlier in the work of G. Forni \cite{Forni}. 

The base for the study of the Bufetov cocycles is matrix cocycles given by the incidence matrices of the Markov partition of the surface into rectangles and the same matrices for partitions produced from it by forward or backward Rauzy induction. These matrix cocycles $\mathbb{A}$ on the space $(\mathcal H,\mathbb P)$ are assumed to satisfy conditions of Oseledets---Pesin reduction theorem (for details see \cite[Assumption 2.3]{Bufetov-Ann}).

Bufetov showed that the spaces $\mathfrak B^\pm$ have finite dimensions and there exists a nondegenerate pairing $\langle{}\cdot{},{}\cdot{}\rangle:\mathfrak B^+\times \mathfrak B^-\to\mathbb C$. For a cocycle $\Phi^-$ one can define a functional $m_{\Phi^-}$ on the space of weakly Lipschitz functions as follows:
we decompose $M$ into admissible rectangles, so $m_{\Phi^-}(\varphi)$ is the sum over all these rectangles $\Pi=\Pi(x,t_1,t_2)$ of the integrals $\int_0^{t_2}\int_0^{t_1}
f(h^+_t(h_{s}^-(x)))\,dt\,d\Phi^-(s)$, where the external integral can be defined as a limit of Riemann sums. 
The map $\varphi\mapsto m_{\Phi^-}(\varphi)$ is linear in $\Phi^-$ so the pairing between $\mathfrak B^+$ and $\mathfrak B^-$ allows us to define the functional $\Phi^+_\varphi$ by the formula
\begin{equation*}
	m_{\Phi^-}(\varphi)=\langle \Phi^+_\varphi,\Phi^-\rangle\quad\text{for any}\quad \Phi^-\in\mathfrak B^-.
\end{equation*}

The Teichm\"uller flow on $\mathcal H$ naturally acts on the fiber bundles of Bufetov's cocycles over it.
Hence one can decompose the space $\mathfrak B^+(M)$ into its Oselelets subspaces:
\begin{equation}\label{eq:decomp}
	\mathfrak B^+(M)=\bigoplus_{i=1}^l \mathfrak B^+_i(M)
\end{equation}
corresponding to the Lyapunov exponents $1=\theta_1>\theta_2\dots>\theta_l>0$.
The space $\mathfrak B^+_1(M)$ is the linear hull of the cocycle $\Phi^+_1(x,t)=t$.

Bufetov proved the following asymptotic decomposition of the ergodic integral for the generic translation flows.

\begin{theorem}[{\cite[Theorem 1]{Bufetov-Ann}}]
	Let $\mathbb P$ be an ergodic $\mathbf{g}$-invariant probability measure on $\mathcal H$. 
	Then for any $\varepsilon>0$ there exists $C_\varepsilon>0$ such that for $\mathbb P$-almost every $M\in \mathcal H$, any $\varphi\in\LipWp(M)$, any $x\in M$, and any $T>0$ we have
	\begin{equation*}
		\left|\int_0^T \varphi(h_t^+(x))\,dt-\Phi^+_\varphi(x,T)\right|\le C_\varepsilon\lipnorm{\varphi}(1+T^\varepsilon).
	\end{equation*}  
\end{theorem}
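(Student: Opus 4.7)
The plan is to establish the sub-polynomial approximation of ergodic integrals by Bufetov cocycle values via Teichmüller renormalization combined with the Oseledets--Pesin reduction for the matrix cocycle $\mathbb A$ on the bundle $\mathfrak B^+$.

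First, I would exploit the Markov decomposition of $M$ into admissible rectangles. A vertical orbit arc $h^+_{[0,T]}(x)$ splits canonically into maximal sub-arcs lying inside single rectangles. Within an admissible rectangle the weakly Lipschitz condition says that integrals of $\varphi$ along two vertical fibers differ by at most $\lipnorm{\varphi}$, while any Bufetov cocycle assigns them the same value by holonomy invariance. Consequently, up to an error bounded by $\lipnorm{\varphi}$ times the number of rectangles visited, the ergodic integral over $h^+_{[0,T]}(x)$ equals $\Phi^+_\varphi(x,T)$.

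The central step is renormalization. Setting $s=\log T$, the Teichmüller flow $\mathbf{g}^{-s}$ compresses the length-$T$ vertical arc to length~$1$ on $\mathbf{g}^{-s}M$, and the level-$s$ Markov partition (obtained by appropriately iterated Rauzy induction) becomes the unit-scale partition there. At unit scale the approximation error from the previous paragraph is $O(\lipnorm{\varphi})$. Now pull back to $M$: Bufetov cocycles transform equivariantly under $\mathbf{g}^s$ via $\mathbb A$, so expanding $\Phi^+_\varphi$ along the Oseledets decomposition \eqref{eq:decomp} gives the main term (the $\mathfrak B^+_1$-component, linear in $T$ via $\Phi^+_1(x,T)=T$) plus lower-order contributions bounded by $T^{\theta_i+\varepsilon}$ with $\theta_i<1$. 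The renormalized approximation error, pushed back through $s$ Teichmüller units, picks up a factor polynomial in $s=\log T$ and is thus absorbed into $T^\varepsilon$.

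The main difficulty will be extracting a single constant $C_\varepsilon$ uniform in $\varphi$, $x$, and $T$ out of Oseledets--Pesin estimates that hold only almost surely and with a tempered (but not bounded) measurable constant. I would work on Pesin regular sets for $\mathbb A$ along the Teichmüller orbit, control the measure of excursions outside these sets via a Borel--Cantelli argument at dyadic times $T_n=2^n$, and interpolate intermediate $T$ by using the trivial bound $|\varphi|\le\lipnorm{\varphi}$. A secondary technical point is handling the initial and terminal arcs near singular points of the Markov partition, where the decomposition into complete rectangles fails; these contribute only a bounded extra error because the residual partial rectangles have vertical length bounded by the maximal rectangle height, which is $O(1)$ by the normalization that fixes $|I|\ge 1$.
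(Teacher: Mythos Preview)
This theorem is not proved in the present paper: it is quoted verbatim from Bufetov's paper \cite{Bufetov-Ann} (as the label ``\cite[Theorem 1]{Bufetov-Ann}'' indicates) and used as a black box, together with its truncated consequence~\eqref{eq:buf-asymp-r}, in the proofs of Proposition~\ref{prop:tn-asymp} and Lemma~\ref{lem:approx}. There is therefore no ``paper's own proof'' to compare your proposal against.

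As for the sketch itself, the overall architecture---decompose the orbit arc along a hierarchy of Markov partitions produced by Rauzy/Teichm\"uller renormalization, use holonomy invariance of Bufetov cocycles and the weak-Lipschitz bound to control the error on each piece, and sum using Oseledets growth rates---is indeed the strategy Bufetov follows in \cite{Bufetov-Ann}. Two points deserve care if you ever flesh this out. First, your ``central step'' renormalizes once to scale $1$ and then claims the error ``picks up a factor polynomial in $s=\log T$''; this is not how the argument actually runs. One does not renormalize a single time but rather decomposes the arc across \emph{all} scales $0,1,\dots,\lfloor\log T\rfloor$, obtaining at each scale a bounded number of pieces each contributing an error $O(\lipnorm{\varphi})$; the total is then $O(\lipnorm{\varphi}\log T)=O(\lipnorm{\varphi}T^\varepsilon)$. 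Second, the uniformity of $C_\varepsilon$ in $\varphi$, $x$, $T$ (with dependence only on $M$ through a full-measure set) is exactly where the Oseledets--Pesin reduction assumption on $\mathbb A$ enters, and the tempered constant is absorbed into the $T^\varepsilon$ rather than removed by a Borel--Cantelli argument at dyadic times as you suggest.
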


Later we will need the following truncated version of this formula: let $\Phi^{+}_{\varphi,\le r}$ be a projection of $\Phi^+_\varphi$ onto the first $r$ terms $\mathfrak B^+_1\oplus\dots\oplus \mathfrak B^+_r$ in the Oseledets decomposition. Then 
\begin{equation}\label{eq:buf-asymp-r}
\left|\int_0^T \varphi(h_t^+(x))\,dt-\Phi^{+}_{\varphi,\le r}(x,T)\right|\le C_\varepsilon\lipnorm{\varphi}(1+T^{\theta_r-\varepsilon}).
\end{equation}  
for some $\varepsilon>0$.

We now pass to Bufetov's limit theorem. Informally speaking, it states that the distribution of the normalized ergodic integral for $\varphi$ tends to the distribution of the first nonzero component of $\Phi^+_\varphi$. Let $\Phi^+_\varphi=\sum_{i=1}^l \Phi^+_{\varphi,i}$, where $\Phi^+_{\varphi,i}\in\mathfrak B^+_i$, then denote $i(\varphi)=\min\{i=2,\dots,l:\Phi^+_{\varphi,i}\ne 0\}$.

The Teichm\"uller flow on the moduli space induces natural action on the bundle $\{\mathfrak{B}^+(M), M\in\mathcal H\}$. Namely, for $\Phi^+\in\mathfrak{B}^+(M)$ and $x\in M$ let $y\in\mathbf{g}^s M$ be the image of $x$ and denote 
\begin{equation}
\label{eq:teich-scale}
(\mathbf{g}^s\Phi^+)(y,t):=\Phi^+(x,e^{s}t),
\end{equation}
then we have $\mathbf{g}^s\Phi^+\in \mathfrak{B}^+(\mathbf{g}^sM)$. Moreover, 
the decomposition \eqref{eq:decomp} is invariant under this action, and we denote the action of $\mathbf{g}^s$ on the $\mathfrak{B}^+_i$ by $\mathbf{g}^s_i$.

As one can see from the approximation theorem, the distribution of $\widehat{I}_t\psi$ is close to the distribution of $\Phi^+_\psi({}\cdot{},t)$, and more precisely, to the distribution $\Phi^+_{\psi,i(\psi)}({}\cdot{},t)$. The scaling~\eqref{eq:teich-scale} allow us to identify the last distribution with the one of $(\mathbf{g}^{\log t}\Phi^+_{\psi,i(\psi)})({}\cdot{},1)$. This yields the following theorem.

\begin{theorem}[{\cite[Theorem 2]{Bufetov-Ann}}]\label{thm:buf-limit}
Let the above-mentioned matrix cocycles satisfy the conditions of Oseledets---Pesin theorem with respect ot an ergodic $\mathbf{g}^s$-invariant probability measure~$\mathbb P$. Then there exists a constant $\alpha>0$ and a measurable function $C\colon \mathcal H\times\mathcal H\to\mathbb R_+$ such that for $\mathbb P$-almost every $M\in\mathcal H$ and every $\psi\in \LipWp(M)$ such that $\Phi^+_\psi\notin\mathfrak{B}^+_1(M)$ (and hence $i(\psi)$ is well defined), we have
\begin{equation}\label{eq:buf-lim-th}
d\bigl(\operatorname{Law}(\widehat{I}_T(\psi)),
\operatorname{Law}((\mathbf{g}_{i(\psi)}^{\log T}\widehat{\Phi}^+_{\psi,i(\psi)})({}\cdot{},1))\bigr)\le C(M,\mathbf{g}^{\log T} M) T^{-\alpha},
\end{equation}
where $d$ is either Kantorovich---Rubinstein or L\'evy---Prohorov distance on the space of distributions.
\end{theorem}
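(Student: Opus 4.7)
The plan is to lift $f$ from the segment $I$ to a weakly Lipschitz observable $\psi$ on $M$ so that IET ergodic sums become flow ergodic integrals at the discrete return times, then compare $I_{R_n(x)}\psi$ to $I_{n\bar r}\psi$ (where $\bar r=\int_I r\,d\mu$ and $r$ is the first return time of the vertical flow to $I$), and finally invoke Theorem~\ref{thm:buf-limit} at $T=n\bar r$. Concretely, write each $y\in M$ uniquely as $h_s^+(x)$ with $x\in I$ and $0\le s<r(x)$ (where $r$ is piecewise constant, taking the height $h_j$ on the base of the $j$-th Markov rectangle), and set
\begin{equation*}
\psi(h_s^+(x)):=\frac{f(x)}{r(x)}\,\rho\!\left(\frac{s}{r(x)}\right)
\end{equation*}
for a fixed smooth probability density $\rho$ on $[0,1]$. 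Then $\int_0^{r(x)}\psi(h_s^+(x))\,ds=f(x)$ and hence $S_n f(x)=I_{R_n(x)}\psi(x)$ with $R_n(x)=\sum_{k<n} r(T^k x)$. I would verify $\psi\in\LipWp(M)$ from the Lipschitz regularity of $f$: in an admissible rectangle $\Pi(x,t_1,t_2)$ the vertical orbits of $x$ and $h_{t_2}^-(x)$ visit the same sequence of Markov rectangles and the IET iterates remain horizontal translates by $t_2$ within each base, so the difference of the two vertical integrals telescopes into $\sum_{k}(f(T^k x)-f(T^k x'))$ plus a bounded boundary term; since the number of returns is at most $t_1/\min r$ and $t_1 t_2\le\mathrm{Area}(M)$, this difference is uniformly bounded.

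Next I would pass from the $n$-indexed discrete asymptotics to the $T$-indexed continuous asymptotics. Fubini yields $\mathbb E I_{n\bar r}\psi=n\int_I f\,d\mu=\mathbb E S_n f$, and the refined Bufetov expansion~\eqref{eq:buf-asymp-r} shows that $\Var S_n f$ and $\Var I_{n\bar r}\psi$ are both of order $n^{2\theta_{i(\psi)}}$. To replace the random time $R_n(x)$ by the deterministic $n\bar r$, I would observe that $r-\bar r$ is itself a piecewise constant zero-mean observable whose lift $\tilde\rho$ (by the same recipe) lies in $\LipWp(M)$ with $\int_M\tilde\rho=0$; hence applying~\eqref{eq:buf-asymp-r} to $\tilde\rho$ controls $|R_n(x)-n\bar r|=|S_n(r-\bar r)(x)|$ by $n^{\theta_{i_0}+\varepsilon}$ almost surely, for some $i_0\ge 2$. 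The residual $S_n f-I_{n\bar r}\psi=\int_{n\bar r}^{R_n(x)}\psi(h_s^+(x))\,ds$ is bounded by $\|\psi\|_\infty|R_n(x)-n\bar r|$, while $\Phi^+_\psi(x,R_n(x))-\Phi^+_\psi(x,n\bar r)$ is controlled via the Oseledets scaling $|\Phi^+_{\psi,i}(\cdot,T)|\lesssim T^{\theta_i+\varepsilon}$ combined with the H\"older property. With $T_n:=n\bar r$, Theorem~\ref{thm:buf-limit} gives
\begin{equation*}
d\bigl(\operatorname{Law}(\widehat I_{T_n}\psi),\operatorname{Law}((\mathbf g_{i(\psi)}^{\log T_n}\widehat\Phi^+_{\psi,i(\psi)})({}\cdot{},1))\bigr)\le C(M,\mathbf g^{\log T_n}M)\,T_n^{-\alpha},
\end{equation*}
and the estimates above transfer this bound (possibly with a smaller $\alpha$) to $\operatorname{Law}(\widehat S_n f)$.

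The excluded finite-codimension subspace of $f$'s is the preimage under the linear lift $f\mapsto\psi$ of the locus $\Phi^+_{\psi,2}=\cdots=\Phi^+_{\psi,l}=0$, a finite list of linear conditions since $\dim\mathfrak B^+(M)<\infty$. The hardest step will be the comparison in the middle paragraph when $i(\psi)$ is large: in that regime the normalization $\sqrt{\Var S_n f}\sim n^{\theta_{i(\psi)}}$ can be strictly smaller than the typical fluctuation $|R_n(x)-n\bar r|\sim n^{\theta_{i_0}}$, so one cannot absorb the time perturbation into a crude remainder and must instead exploit holonomy invariance of the Bufetov cocycles to cancel contributions of $\Phi^+_{\psi,i(\psi)}(x,\cdot)$ over the random intervals $[n\bar r,R_n(x)]$. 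I expect this is precisely the role of Section~\ref{sec:unif-distr}: to establish a uniform distribution statement for the joint Teichm\"uller data $(M,x,\log R_n(x))$ ensuring that the limit set of $\operatorname{Law}(\widehat S_n f)$ as $n\to\infty$ coincides with that of $\operatorname{Law}(\widehat I_T\psi)$ as $T\to\infty$.
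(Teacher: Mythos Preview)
Your proposal does not address the stated theorem. Theorem~\ref{thm:buf-limit} is Bufetov's limit theorem for translation flows, quoted from~\cite{Bufetov-Ann} and not proved in this paper at all; what you have written is instead a proof sketch for the Main Theorem (Theorem~\ref{thm:var-main}), which \emph{uses} Theorem~\ref{thm:buf-limit} as a black box --- indeed you invoke it explicitly in your second paragraph.

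Interpreted as a plan for Theorem~\ref{thm:var-main}, your outline is in the right spirit but misses the paper's key device and misidentifies the role of Section~\ref{sec:unif-distr}. The paper's lift is simply $\psi_f(x,y)=f(x)/h(x)$ (no smoothing $\rho$ is needed), and the paper restricts throughout to the case $\Phi^+_{\psi_f,2}\ne 0$, i.e.\ $i(\psi)=2$, which sidesteps your worry about large $i(\psi)$. More importantly, the central difficulty is not replacing the random return time by $n\bar r$; it is that $\widehat S_n f$ has initial point $x\sim\Unif(I)$ whereas Theorem~\ref{thm:buf-limit} concerns $\widehat I_T\psi$ with initial point $p\sim\Unif(M)$. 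Your middle paragraph compares $S_nf(x)$ to $I_{n\bar r}\psi(x)$ pointwise for $x\in I$ and then applies Theorem~\ref{thm:buf-limit} to $\widehat I_{T_n}\psi$, but these two random variables live on different probability spaces and nothing you wrote bridges that gap. Section~\ref{sec:unif-distr} is not about ``uniform distribution of joint Teichm\"uller data''; it bridges $\Unif(I)$ and $\Unif(M)$ by smearing the initial point backwards along the flow: one sets $q=h^+_{-\gamma n^{\theta_2-\delta}}(x)$ with independent $\gamma\sim\Unif[0,1]$, shows (Proposition~\ref{prop:appr-dens}) that $q$ has density $1+O(n^{(\theta_2-\delta)(\theta_2-1+\varepsilon)})$ on $M$, and then compares the three normalized variables $\eta_0=\Phi^+_{\psi_f,2}(p,n)$, $\eta_1=\Phi^+_{\psi_f,2}(q,n)$, $\eta_2=S_nf(x)$ in Kantorovich--Rubinstein and L\'evy--Prokhorov distance.
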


Note that in the case of simple Lyapunov exponents the formula \eqref{eq:buf-lim-th} has a simpler form: $\operatorname{Law}((\mathbf{g}_{i(\psi)}^{\log T}\widehat{\Phi}^+_{\psi,i(\psi)})({}\cdot{},1))$ is the normalized distribution of \emph{any} nonzero cocycle in $\mathfrak{B}^+_{i(\psi)}(\mathbf{g}^{\log T}M)$ since the normalization depends only on the projectivization of $\mathbf{g}^{\log T}({\Phi}^+_{\psi,i(\psi)})$, and $\mathfrak{B}^+_{i(\psi)}$ is one-dimensional.

\begin{remark}
	In particular, Bufetov's theorems applies to the Masur---Veech ``smooth'' measure~$\mathbb P$, \cite{Masur,Veech}. In this case the corresponding measure~$\widehat{\mathbb{P}}$ on the space of interval exchange transformations is equivalent to the Lebesgue measure. As Avila and Viana showed in~\cite{AvilaViana}, the second Lyapunov exponent for the Masur---Veech measure is simple.
	
	Let us also recall the recent result by Chaika and Eskin~\cite{ChaiEsk}. It implies that for \emph{any} given flat surface the set of directions such that the corresponding translation flow satisfies assumptions of Bufetov's theorems has full Lebesgue measure on the circle. 
\end{remark}

\section{Main theorem and approximation of ergodic sums via integrals}
\label{sec:sums-intls}

Let us give the precise statement for the main theorem of this paper.

For $\widehat{\mathbb{P}}$-almost every interval exchange transformation~$T$ there exists a surface $M\in\mathcal{H}$ with $T=T_M$ such that Bufetov's theorems hold for $M$.
The surface $M$ is decomposed into several rectangles of heights $h_i$ with bases on the continuity segments $I_i$ of $T$. 
Introduce a coordinate system $(x,y)$ on the union of these rectangles so that $x$ is a ``horizontal'' coordinate and $y$ is a ``vertical'' one, i.e. the flow $h^+_t$ corresponds to the differential equations $\dot x=0$, $\dot y=1$.
In other words, the flow $h_+^t$ on $M$ is the special flow over $T$ with the roof function
\begin{equation*}
h(x)=h_i\text{ for }x\in I_i.
\end{equation*}
For any function $f$ on $I$ define the following function $\psi_f$ on $M$.
\begin{equation}\label{eq:psi-f}
\psi_f(x,y)=\frac{f(x)}{h(x)}.
\end{equation}

\begin{theorem}[Main theorem restated]\label{thm:var-main}
	Let $\mathbb P$ be an ergodic $\mathbf{g}^s$-invariant probability measure on a connected component $\mathcal H$ of the space of Abelian differentials and assume that $\mathbb P$ satisfies the conditions of Bufetov's limit theorem.
	Then for any $\varepsilon>0$ there exists a measurable function $C\colon \mathcal H\times\mathcal H\to\mathbb R_+$ such that the following holds.
	
	Take any $T\in\widehat{\mathcal{H}}$ such that there exists $M\in\mathcal H$ with $T=T_M$ such that the conditions of Bufetov's approximation theorem hold for $M$ (recall that this takes place for $\widehat{\mathbb P}$-almost all $T$). Then for all functions $f$ on $I$ such that $\psi_f\in\LipWp(M)$ and $\Phi^+_{\psi_f,2}\ne 0$ we have
	\begin{align*}
	d_{KR}\bigl(\operatorname{Law}(\widehat{S}_nf),
	\operatorname{Law}((\mathbf{g}_{2}^{\log n}\widehat{\Phi}^+_{\psi_f,2})({}\cdot{},1))\bigr)&\le C(M,\mathbf{g}^{\log n} M) n^{\theta_2^2-\theta_2+\varepsilon},\\
	d_{LP}\bigl(\operatorname{Law}(\widehat{S}_nf),
	\operatorname{Law}((\mathbf{g}_{2}^{\log n}\widehat{\Phi}^+_{\psi_f,2})({}\cdot{},1))\bigr)&\le C(M,\mathbf{g}^{\log n} M) n^{\frac23\bigl(\theta_2^2-\theta_2\bigr)+\varepsilon}.
	\end{align*}
\end{theorem}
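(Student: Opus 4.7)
The plan is to build a bridge from the discrete IET to the continuous vertical flow and transfer Bufetov's limit theorem through it. The definition \eqref{eq:psi-f} of $\psi_f$ is precisely engineered so that $h_t^+$ is the special flow over $T$ with roof $h$ and $\int_0^{h(x)}\psi_f(x,y)\,dy=f(x)$. Summing over the first $n$ IET steps yields the key identity
\begin{equation*}
S_n f(x)=\int_0^{S_n h(x)}\psi_f(h_t^+(x,0))\,dt=I_{S_n h(x)}\psi_f(x,0),
\end{equation*}
so the ergodic sum equals an ergodic integral evaluated at the random time $S_n h(x)$.

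Next I would replace this random time by the deterministic one $n\bar h$, where $\bar h=\int_I h$. By applying Bufetov's approximation to the weakly Lipschitz lift of $h-\bar h$ (or using the Kontsevich--Zorich cocycle on $\widehat{\mathcal H}$ directly), one shows $S_n h(x)-n\bar h=O(n^{\theta_2+\varepsilon})$ pointwise. Writing
\begin{equation*}
I_{S_n h(x)}\psi_f(x,0)=I_{n\bar h}\psi_f(x,0)+\int_{n\bar h}^{S_n h(x)}\psi_f(h_t^+(x,0))\,dt,
\end{equation*}
the remainder is an ergodic integral over a window of length $O(n^{\theta_2+\varepsilon})$, and a second application of Bufetov's approximation at this shorter scale bounds it by $O(n^{\theta_2(\theta_2+\varepsilon)+\varepsilon})=O(n^{\theta_2^2+\varepsilon'})$. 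This is the origin of the exponent $\theta_2^2$ in the Main Theorem.

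For the main term $I_{n\bar h}\psi_f(x,0)$, the truncated Bufetov decomposition \eqref{eq:buf-asymp-r} with $r=2$ gives
\begin{equation*}
I_{n\bar h}\psi_f(x,0)=\Phi^+_{\psi_f,1}(x,n\bar h)+\Phi^+_{\psi_f,2}(x,n\bar h)+O(n^{\theta_2-\varepsilon}),
\end{equation*}
whose linear drift $\Phi^+_{\psi_f,1}(\cdot,n\bar h)$ cancels $\mathbb E S_n f$ up to a correction from $\mathbb E[S_n h-n\bar h]$ that is absorbed into the error. Since $\Var S_n f\sim n^{2\theta_2}$ (controlled by the second Oseledets component), dividing by $\sqrt{\Var S_n f}\sim n^{\theta_2}$ turns the absolute error $O(n^{\theta_2^2+\varepsilon})$ into the stated Kantorovich--Rubinstein bound $n^{\theta_2^2-\theta_2+\varepsilon}$. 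The scaling \eqref{eq:teich-scale} then identifies the normalized second Oseledets term with $(\mathbf{g}_2^{\log n}\widehat{\Phi}^+_{\psi_f,2})(\cdot,1)$ up to constants that are absorbed into $C(M,\mathbf{g}^{\log n}M)$. The Lévy--Prokhorov bound follows from a standard Chebyshev-type trick: an $L^2$-control of the error by $\delta$ produces, via Markov's inequality, an event of measure $\delta^{2/3}$ outside of which the error is at most $\delta^{2/3}$, yielding the $2/3$ exponent.

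The main obstacle will be the interplay between the two reductions above: the estimate $S_n h(x)=n\bar h+O(n^{\theta_2+\varepsilon})$ must hold in a sense strong enough (uniformly in $x$, or with sharp $L^p$-control) for the subsequent Bufetov estimate over a random window of size $n^{\theta_2}$ to be valid pointwise almost surely, and the two nested applications of Bufetov's theorem have to be combined without losing the dependence on $(M,\mathbf{g}^{\log n}M)$. A related subtlety is ensuring that the residual contribution from $\mathbb E\int_{n\bar h}^{S_n h(x)}\psi_f$ is genuinely absorbed into the error rather than forming an additional drift at the relevant scale.
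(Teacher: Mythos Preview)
Your reduction from $S_nf(x)$ to $\Phi^+_{\psi_f,2}((x,0),n)$ up to an $O(n^{\theta_2^2+\varepsilon})$ error is essentially correct and matches the paper's Lemma~\ref{lem:approx} (modulo the unstated but harmless WLOG $\bar f=0$, without which your remainder $\int_{n\bar h}^{S_nh}\psi_f$ has a leading term of size $\bar f\cdot O(n^{\theta_2})$ that does \emph{not} vanish after normalization). But there is a genuine missing step after that.

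You have reduced $\widehat S_n f$ to the law of $\Phi^+_{\psi_f,2}((x,0),n)$ with $x\sim\Unif(I)$, and you then invoke the scaling \eqref{eq:teich-scale} to identify this with $(\mathbf g_2^{\log n}\widehat\Phi^+_{\psi_f,2})(\cdot,1)$. The scaling identity is pointwise and does not change the reference measure: the law appearing in Bufetov's Theorem~\ref{thm:buf-limit} is taken with the initial point \emph{uniform on the surface $M$}, whereas your initial point lives on the one-dimensional transversal $I$. These are genuinely different distributions, and nothing in your outline bridges them. This is precisely the obstacle the paper isolates at the start of Section~\ref{sec:unif-distr}: one introduces an auxiliary random point $q=h^+_{-\gamma n^{\theta_2-\delta}}(x,0)$ with $\gamma\sim\Unif[0,1]$ independent of $x\sim\Unif(I)$, shows (Proposition~\ref{prop:appr-dens}) that $q$ has density $1+O(n^{(\theta_2-\delta)(\theta_2-1+\varepsilon)})$ with respect to $\Unif(M)$, and then compares the three laws $\widehat\eta_0$ (initial point $\sim\Unif(M)$), $\widehat\eta_1$ (initial point $q$), $\widehat\eta_2=\widehat S_nf$ in Lemmas~\ref{lem:dist-01} and~\ref{lem:dist-12}. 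The shift by $\gamma n^{\theta_2-\delta}$ is harmless for the cocycle value (Corollary~\ref{cor:approx}) but is exactly what spreads the point mass on $I$ into a near-uniform measure on $M$. Without this step the proof is incomplete.

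A smaller point: your heuristic for the $2/3$ in the L\'evy--Prokhorov bound is not quite where it enters. The additive error already gives the same rate for $d_{KR}$ and $d_{LP}$; the $2/3$ arises when comparing $\widetilde\eta$ with its renormalization $\widehat\eta=(\widetilde\eta-\mathbb E\widetilde\eta)/\sqrt{\Var\widetilde\eta}$, via the estimate $d_{LP}(\xi,(1+\varepsilon)\xi)\le |\varepsilon|^{2/3}(\Var\xi)^{1/3}(1+o(1))$ (Proposition in Section~\ref{sec:unif-distr}), which is indeed proved by a Chebyshev-type splitting but applied to the multiplicative rescaling rather than to the additive error you describe.
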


\begin{remark}
	Note that if $f$ is Lipschitz on each continuity segment~$I_i$ of the interval exchange transformation~$T$, then $\psi_f\in\LipWp(M)$. Indeed, the function $\psi_f$ is Lipschitz continuous inside any admissible rectangle with the same constant $L_f\cdot(1/\min(h_j))$, where $L_f$ is the Lipschitz constant for $f$. It remains to observe that the product of the height and the width of any admissible rectangle does not exceed the area of $M$, which equals~$1$. 
\end{remark}

The first step of the proof is to approximate an ergodic sum for the interval exchange transformation $T\colon I\to I$ by an ergodic integral for translation flow on some surface $M$ such that $T=T_M$. Until the end of the paper we assume that the setting and the conditions of Theorem~\ref{thm:var-main} hold.

\begin{proposition}
	1. Let $t_n(x)$ be the time when the point $(x,0)$ under the action of the flow $h^+_t$ makes its $n$-th return to the transversal $I=\{y=0\}$.
	Then one have
	\begin{equation}\label{eq:Sn-and-It}
	S_n f(x)=I_{t_n(x)}\psi_f(x,0).
	\end{equation}
	2. $\displaystyle \int_If(x)\,dx=\int_M \psi_f(x,y)\,dxdy$.
\end{proposition}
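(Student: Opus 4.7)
The plan is to unpack the special-flow description of the vertical flow $h^+_t$ on $M$ and then observe that both parts of the proposition reduce to direct bookkeeping. By the construction of the rectangle decomposition, a point $(x,0)\in I$ rises vertically at unit speed inside the rectangle $I_i\times[0,h_i]$ containing it, reaches the top at time $h(x)=h_i$, and is identified with $(Tx,0)$. Iterating, the $n$-th return time is
\begin{equation*}
t_n(x)=\sum_{k=0}^{n-1}h(T^kx)=S_n h(x),
\end{equation*}
and the flow trajectory from $(x,0)$ up to $(T^nx,0)$ decomposes into $n$ vertical segments, the $k$-th one being $\{(T^kx,y):0\le y\le h(T^kx)\}$.

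For part~1, I would split the ergodic integral $I_{t_n(x)}\psi_f(x,0)$ along this decomposition. On the $k$-th segment the function $\psi_f$ depends only on the horizontal coordinate, and by definition~\eqref{eq:psi-f} takes the constant value $f(T^kx)/h(T^kx)$; integrating over a segment of length $h(T^kx)$ therefore contributes exactly $f(T^kx)$. Summing over $k=0,\dots,n-1$ yields $S_nf(x)$.

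For part~2, I would apply Fubini to the rectangle decomposition $M=\bigsqcup_i I_i\times[0,h_i]$, noting that this carries Lebesgue measure $dx\,dy$ and has total area~$1$. Because $\psi_f$ is independent of $y$, the inner integral on $I_i\times[0,h_i]$ equals $h_i\cdot(f(x)/h_i)=f(x)$, and summing over $i$ gives $\int_I f(x)\,dx$.

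There is no real obstacle here: the proposition is essentially a definition-unpacking exercise, stating that $\psi_f$ was chosen precisely so that the Birkhoff sums of $f$ for $T$ coincide with the ergodic integrals of $\psi_f$ for $h_t^+$ at return times, and so that averaging $\psi_f$ over $M$ recovers averaging $f$ over $I$. The only thing to be careful about is the handling of the boundary/endpoint coordinates between successive rectangles, which is harmless since these form a set of measure zero.
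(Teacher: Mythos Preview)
Your proposal is correct and follows essentially the same approach as the paper: split the ergodic integral at the successive return times, use that $\psi_f$ is constant along each vertical segment, and for part~2 integrate out the $y$-variable over each rectangle. The paper's proof is virtually identical in substance, just slightly more compressed.
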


\begin{proof}
	1. Observe that
	\begin{equation*}
		\int_0^{t_1(x)}\psi_f(h_t^+(x,0))\,dt=\int_0^{h(x)}\frac{f(x)}{h(x)}\,dt=f(x).
	\end{equation*}
	Now
	\begin{equation*}
	I_{t_n(x)}\psi_f(x,0)=\sum_{k=0}^{n-1}\int_{t_k(x)}^{t_{k+1}(x)} \psi_f(h_t^+(x,0))\,dt=\sum_{k=0}^{n-1}\int_0^{h(T^k(x))}\frac{f(T^k x)}{h(T^k x)}dt=S_nf(x).
	\end{equation*}
	2. The integrand in the right-hand side of the equality does not depend on~$y$, and its integration in~$y$ yields the left-hand side.  
\end{proof}

\begin{proposition}\label{prop:tn-asymp}
  Let $\psi_1$ be the function on $M$ constructed by~\eqref{eq:psi-f} from the constant function~$1$ on~$I$. Then for any $\delta>0$ there exists $A_\delta$ such that 
  \begin{equation*}
  	\Bigl| t_n(x)-\bigl[n-\Phi^+_{\psi_1,2}((x,0),n)\bigr]\Bigr|\le A_\delta(1+n^{\theta_2+\delta}).
  \end{equation*}	
\end{proposition}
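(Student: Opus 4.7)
The plan is to apply the identity \eqref{eq:Sn-and-It} from the previous proposition to the constant function $f\equiv 1$ on $I$, and then invoke Bufetov's truncated asymptotic \eqref{eq:buf-asymp-r} to solve for $t_n(x)$ in terms of $n$ and the cocycle. Since $S_n 1=n$, the identity reads
\[
n=\int_0^{t_n(x)}\psi_1(h_t^+(x,0))\,dt,
\]
converting the combinatorial quantity $t_n(x)$ into an ergodic integral accessible to Bufetov's machinery.

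Next I apply \eqref{eq:buf-asymp-r} with $r=2$. The one-dimensional space $\mathfrak B^+_1$ is spanned by $\Phi^+_1(y,t)=t$; comparing the full Bufetov asymptotic with the Birkhoff ergodic theorem forces the coefficient $\Phi^+_{\psi_1,1}((x,0),T)/T$ to equal $\bar\psi_1$, which is $1$ under the normalization of the proposition. Hence
\[
n \;=\; t_n(x)+\Phi^+_{\psi_1,2}((x,0),t_n(x))+O\bigl(t_n(x)^{\theta_3+\varepsilon}\bigr).
\]
The Oseledets structure on $\mathfrak B^+_2$ gives a uniform-in-$y$ bound $|\Phi^+_{\psi_1,2}(y,T)|\le C_\varepsilon(1+T^{\theta_2+\varepsilon})$, and combined with the display above and the ergodic estimate $t_n(x)\asymp n$ this yields the first-pass control $|t_n(x)-n|=O(n^{\theta_2+\varepsilon})$.

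The final step is to swap $t_n(x)$ for $n$ inside $\Phi^+_{\psi_1,2}$. By additivity of the cocycle,
\[
\bigl|\Phi^+_{\psi_1,2}((x,0),t_n(x))-\Phi^+_{\psi_1,2}((x,0),n)\bigr|=\bigl|\Phi^+_{\psi_1,2}\bigl(h^+_{\min(n,t_n(x))}(x,0),|t_n(x)-n|\bigr)\bigr|,
\]
and the uniform bound above estimates this by $C_\varepsilon|t_n(x)-n|^{\theta_2+\varepsilon}=O\bigl(n^{(\theta_2+\varepsilon)^2}\bigr)$. Since $\theta_2<1$ implies $\theta_2^2<\theta_2$, for $\varepsilon$ small enough this contribution sits safely inside the target $O(n^{\theta_2+\delta})$, and together with the $O(n^{\theta_3+\varepsilon})$ Bufetov remainder it finishes the proof. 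I expect the main subtlety to be establishing and using the uniform-in-$y$ bound on $|\Phi^+_{\psi_1,2}(y,T)|$ (the step that permits the swap from $t_n(x)$ to $n$); beyond that, the proof is bookkeeping on Oseledets exponents.
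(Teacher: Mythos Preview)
Your argument is correct and follows essentially the same route as the paper: start from $n=I_{t_n(x)}\psi_1(x,0)$, use Bufetov's truncated asymptotic to write $n=t_n(x)+\Phi^+_{\psi_1,2}((x,0),t_n(x))+O(t_n(x)^{\theta_3+\varepsilon})$, extract the first-pass bound $|t_n(x)-n|=O(n^{\theta_2+\varepsilon})$, and then swap $t_n(x)$ for $n$ inside the cocycle via additivity and the growth bound $|\Phi^+_{\psi_1,2}(y,T)|\le C_\varepsilon(1+T^{\theta_2+\varepsilon})$. The only organizational difference is that the paper first invokes \eqref{eq:buf-asymp-r} with $r=1$ to derive the preliminary estimate $t_n(x)\asymp n$ (which you cite as ``the ergodic estimate'') before passing to $r=2$; both arrive at the same error $O(n^{(\theta_2+\varepsilon)^2})$, which is in fact stronger than the stated $O(n^{\theta_2+\delta})$.
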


\begin{remark}
	Later we will use the $O$-notation for such estimates. The constants in $O(\,{\cdot}\,)$ can depend on the surface $M$ and small parameters $\delta$, $\varepsilon$, etc.
\end{remark}

\begin{proof}
Applying the formula~\eqref{eq:Sn-and-It} to the function $f\equiv 1$, we get the equation
\begin{equation}\label{eq:n-and-psi1}
n=I_{t_n(x)}\psi_1(x,0),
\end{equation}
and we will find asymptotics for its solution.

First of all, \eqref{eq:buf-asymp-r} with $r=1$ gives that for any $\alpha>0$ there exists $C>0$ such that for all $t>0$ we have
\begin{equation*}
|I_t\psi_1-\overline{\psi_1}t|\le \alpha t+C,\quad\text{where}\quad \overline{\psi_1}=\int_M\psi_1\,dxdy=1.
\end{equation*}
Therefore,
\begin{equation*}
	(1-\alpha)t_n(x)-C\le n\le (1+\alpha)t_n(x) +C,
\end{equation*}
hence, conversely, for any $\beta>0$ and some $D>0$ we have 
\begin{equation*}
(1-\beta)n-D\le t_n(x)\le (1+\beta)n +D.
\end{equation*}
Further, the formula~\eqref{eq:buf-asymp-r} with $r=1$ gives us that
\begin{equation*}
|n-t_n(x)|=|n-\Phi^+_{\psi_1,1}((x,0),t_n(x))|
\le C_\varepsilon (1+t_n(x)^{\theta_2+\varepsilon})\le C'_\varepsilon (1+n^{\theta_2+\varepsilon}).
\end{equation*} 
Combining this estimate with \eqref{eq:buf-asymp-r} for $r=2$ we have
\begin{multline*}
	|n-t_n(x)-\Phi^+_{\psi_1,2}((x,0),n)|\le
	|n-\Phi^+_{\psi_1,\le 2}((x,0),t_n(x))|+
	|\Phi^+_{\psi_1,2}(h^+_n((x,0)),t_n(x)-n)|\\
	{}\le 
	C''_\varepsilon (1+n^{\theta_3+\varepsilon})+\tilde C_\varepsilon (1+|t_n(x)-n|^{\theta_2+\varepsilon})\le \tilde C'_\varepsilon (1+n^{(\theta_2+\varepsilon)^2}).
\end{multline*}
Hence we have obtained that for any $\delta>0$ there exists $A_\delta>0$ such that
\begin{equation*}
	\bigl| t_n(x)-(n-\Phi^+_{\psi_1,2}((x,0),n))\bigr|\le A_\delta (1+n^{(\theta_2^2+\delta)}).
\end{equation*}
Now choosing $\varepsilon$ such that $(\theta_2+\varepsilon)^2\le\theta_2^2+\delta$ we obtain the desired estimate.
\end{proof}

\begin{lemma}\label{lem:approx}Denote $\overline f=(1/|I|)\int_I f(x)\,dx$.
	Then the  following asymptotic estimate holds for $S_n(f)$: for any $\delta>0$
	\begin{equation*}
	S_n(f)=\overline f\cdot n + \Phi^+_{(\psi_f- \overline f \psi_1),2}((x,0),n)+ O(n^{\theta_2^2+\delta})\lipnorm{\psi_f}.
	\end{equation*}
\end{lemma}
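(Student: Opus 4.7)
The starting point is the identity $S_n f(x) = I_{t_n(x)}\psi_f(x,0)$ from \eqref{eq:Sn-and-It}. The plan is to expand the right-hand side by the truncated Bufetov approximation \eqref{eq:buf-asymp-r} at level $r=2$ and then replace the random time $t_n(x)$ by $n$ by means of Proposition~\ref{prop:tn-asymp}. Linearity of the assignment $\psi\mapsto\Phi^+_\psi$, inherited from the linearity of $m_{\Phi^-}(\psi)$ in $\psi$, will finally assemble the two resulting cocycle terms into a single one.

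First I would split
\[
I_{t_n(x)}\psi_f(x,0) = I_n\psi_f(x,0) + \int_n^{t_n(x)}\psi_f\bigl(h^+_s(x,0)\bigr)\,ds.
\]
For the main piece $I_n\psi_f$, apply \eqref{eq:buf-asymp-r} with $r=2$. Since $\mathfrak{B}^+_1$ is the line spanned by $\Phi^+_1(x,t)=t$, the first Oseledets component of $\Phi^+_{\psi_f}$ equals $\overline{\psi_f}\cdot t = \overline f\cdot t$, using part~2 of the first Proposition together with the normalization $|I|=1$ forced by the identity $\overline{\psi_1}=1$ used inside Proposition~\ref{prop:tn-asymp}. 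Absorbing the error $O(n^{\theta_3+\varepsilon})$ into $O(n^{\theta_2^2+\delta})$ exactly as in that proposition yields
\[
I_n\psi_f(x,0) = \overline f\cdot n + \Phi^+_{\psi_f,2}((x,0),n) + O(n^{\theta_2^2+\delta})\lipnorm{\psi_f}.
\]

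For the correction integral, rewrite it (up to sign according to whether $t_n\gtrless n$) as $I_{|t_n-n|}\psi_f$ evaluated at $h^+_{\min(n,t_n)}(x,0)$ and apply the first-order Bufetov approximation, giving
\[
\int_n^{t_n(x)}\psi_f\bigl(h^+_s(x,0)\bigr)\,ds = \overline f\,(t_n-n) + O\bigl(|t_n-n|^{\theta_2+\varepsilon}\bigr)\lipnorm{\psi_f}.
\]
The weak estimate $|t_n-n|=O(n^{\theta_2+\varepsilon})$, which is already extracted inside the proof of Proposition~\ref{prop:tn-asymp}, turns the error into $O(n^{(\theta_2+\varepsilon)^2})=O(n^{\theta_2^2+\delta})\lipnorm{\psi_f}$; the sharp form of the same proposition then gives $\overline f(t_n-n) = -\overline f\,\Phi^+_{\psi_1,2}((x,0),n) + O(n^{\theta_2^2+\delta})$.

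Summing the two contributions,
\[
S_nf(x) = \overline f\cdot n + \Phi^+_{\psi_f,2}((x,0),n) - \overline f\,\Phi^+_{\psi_1,2}((x,0),n) + O(n^{\theta_2^2+\delta})\lipnorm{\psi_f},
\]
and linearity of the projection onto $\mathfrak{B}^+_2$ collapses the two cocycles into $\Phi^+_{\psi_f-\overline f\psi_1,2}((x,0),n)$, which is the claim. The principal obstacle is that Bufetov's approximation has to be applied at the irrational time $t_n(x)$ rather than at $n$: commuting the cocycle past a time shift of size $|t_n-n|\lesssim n^{\theta_2+\varepsilon}$ costs a further Hölder factor of order $\theta_2+\varepsilon$, and this composition of two Hölder estimates of exponent $\theta_2$ is precisely the mechanism producing the squared exponent $\theta_2^2$ in the final error — the same mechanism already exploited for the proof of Proposition~\ref{prop:tn-asymp}.
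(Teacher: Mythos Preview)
Your proof is correct and follows essentially the same route as the paper: both start from $S_n f = I_{t_n}\psi_f$, invoke the $r=2$ Bufetov approximation~\eqref{eq:buf-asymp-r}, replace $t_n$ by $n$ via Proposition~\ref{prop:tn-asymp}, and obtain the squared exponent $\theta_2^2$ by composing two H\"older-type estimates of exponent~$\theta_2$. The only cosmetic difference is that the paper applies~\eqref{eq:buf-asymp-r} directly at the time $t_n(x)$ and then shifts the cocycle $\Phi^+_{\psi_f,2}$ from $t_n$ to $n$ by additivity, whereas you split the integral at $n$ first and handle the remainder $\int_n^{t_n}\psi_f$ with the first-order approximation; the resulting intermediate formula~\eqref{eq:Snf-decomp-lem} and all error bounds coincide.
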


\begin{proof}
	Applying Proposition~\ref{prop:tn-asymp} and formula~\eqref{eq:buf-asymp-r} with $r=2$ to the identity~\eqref{eq:Sn-and-It} we have
	\begin{equation}\label{eq:Snf-decomp-lem}
		S_nf(x)=\overline f\cdot n - \overline f \cdot \Phi^+_{\psi_1,2}((x,0),n)+
		\Phi^+_{\psi_f,2}((x,0),t_n(x))+O(n^{\theta_2^2+\delta}).
	\end{equation}
	The third term in the right-hand side of this formula is approximated as follows:
	\begin{multline*}
		\Phi^+_{\psi_f,2}((x,0),t_n(x))-\Phi^+_{\psi_f,2}((x,0),n)=\Phi^+_{\psi_f,2}(h^+_n(x,0),t_n(x)-n)={}\\{}=
		O\bigl((t_n(x)-n)^{\theta_2+\varepsilon}\bigr)\lipnorm{\psi_f}=
		O(n^{(\theta_2+\varepsilon)^2})\lipnorm{\psi_f}.
	\end{multline*}
	Substituting this estimate with $\varepsilon$ such that $(\theta_2+\varepsilon)^2<\theta_2^2+\delta$ into \eqref{eq:Snf-decomp-lem}, we obtain the statement of the lemma. 
\end{proof}

Later we will use the following extension of this lemma.

\begin{corollary}\label{cor:approx}
	Under the assumptions of Lemma~\ref{lem:approx} for any $\varepsilon>0$ one has that for any $\gamma\in [0,1]$
	\begin{equation}\label{eq:Snf-decomp-cor}
	S_n(f)=\overline f\cdot n + \Phi^+_{(\psi_f- \overline f \psi_1),2}(h^+_{-\gamma n^{\theta_2-\varepsilon}}(x,0),n)+ O(n^{\theta_2^2+\delta})\lipnorm{\psi_f}.
	\end{equation}
	where the constant in $O(\,{\cdot}\,)$ does not depend on $\gamma$.
\end{corollary}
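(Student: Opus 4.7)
The plan is to deduce Corollary~\ref{cor:approx} from Lemma~\ref{lem:approx} using the additivity of the Bufetov cocycle together with the same Hölder-type growth bound for $\mathfrak{B}^+_2$-valued cocycles that already appears in the last display of the proof of Lemma~\ref{lem:approx}. The intuition is that the shift $s := \gamma n^{\theta_2-\varepsilon}$ is exactly calibrated so that the increment of a cocycle in $\mathfrak{B}^+_2$ over an arc of length~$s$ is $O(n^{\theta_2^2-\varepsilon^2})$, which is absorbed into the error $O(n^{\theta_2^2+\delta})$ already present in Lemma~\ref{lem:approx}.

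Write $\varphi := \psi_f-\overline f\,\psi_1$ and $\Phi := \Phi^+_{\varphi,2}\in\mathfrak B^+_2$, and set $s:=\gamma n^{\theta_2-\varepsilon}\in[0,n^{\theta_2-\varepsilon}]$. Lemma~\ref{lem:approx} already gives
\[
S_n f(x) = \overline f\cdot n + \Phi((x,0),n) + O(n^{\theta_2^2+\delta})\lipnorm{\psi_f},
\]
so the only new ingredient is to replace $\Phi((x,0),n)$ by $\Phi(h^+_{-s}(x,0),n)$ at the claimed cost, uniformly in $\gamma\in[0,1]$.

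For this step, I would evaluate $\Phi(h^+_{-s}(x,0),n+s)$ in two different ways using the additivity of the cocycle:
\[
\Phi(h^+_{-s}(x,0),s)+\Phi((x,0),n)=\Phi(h^+_{-s}(x,0),n)+\Phi(h^+_{n-s}(x,0),s),
\]
which rearranges to
\[
\Phi((x,0),n)-\Phi(h^+_{-s}(x,0),n)=\Phi(h^+_{n-s}(x,0),s)-\Phi(h^+_{-s}(x,0),s).
\]
Each term on the right is the value of a cocycle in $\mathfrak{B}^+_2$ on an arc of length~$s$. Applying the Oseledets-type bound $|\Phi(y,t)|=O(t^{\theta_2+\varepsilon})\lipnorm{\psi_f}$ (the same one invoked at the end of the proof of Lemma~\ref{lem:approx}), one obtains, with constants independent of $y$ and of $\gamma$,
\[
\bigl|\Phi((x,0),n)-\Phi(h^+_{-s}(x,0),n)\bigr|\le O\bigl(s^{\theta_2+\varepsilon}\bigr)\lipnorm{\psi_f}\le O\bigl(n^{(\theta_2-\varepsilon)(\theta_2+\varepsilon)}\bigr)\lipnorm{\psi_f}=O\bigl(n^{\theta_2^2-\varepsilon^2}\bigr)\lipnorm{\psi_f}.
\]

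Finally, choosing $\varepsilon$ small so that $-\varepsilon^2<\delta$ (one may first shrink $\delta$ and then pick $\varepsilon$ accordingly) absorbs this contribution into $O(n^{\theta_2^2+\delta})\lipnorm{\psi_f}$, which combined with the display from Lemma~\ref{lem:approx} yields~\eqref{eq:Snf-decomp-cor} with a constant uniform in $\gamma\in[0,1]$. I do not expect any real obstacle here: the argument is essentially a single cocycle identity plus the already-established growth bound for $\mathfrak{B}^+_2$, and the uniformity in $\gamma$ is automatic because the only $\gamma$-dependence enters through $s\le n^{\theta_2-\varepsilon}$ and the Hölder bound is monotone in $s$.
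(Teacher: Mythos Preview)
Your proposal is correct and follows essentially the same route as the paper: start from Lemma~\ref{lem:approx}, use the additivity of the cocycle to rewrite $\Phi((x,0),n)-\Phi(h^+_{-s}(x,0),n)$ as a difference of two cocycle values over arcs of length $s=\gamma n^{\theta_2-\varepsilon}$, and bound each by $O(s^{\theta_2+\varepsilon})\lipnorm{\psi_f}=O(n^{\theta_2^2-\varepsilon^2})\lipnorm{\psi_f}$. The paper expresses the same identity via negative-time values $\Phi(h^+_n(x,0),-s)-\Phi((x,0),-s)$, which is just a cosmetic variant of your two-way splitting of $\Phi(h^+_{-s}(x,0),n+s)$.
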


\begin{proof}
	The difference between asymptotics in~\eqref{eq:Snf-decomp-lem} and \eqref{eq:Snf-decomp-cor} equals
	\begin{equation*}
		\Phi^+_{(\psi_f - \overline f \psi_1),2}(h^+_n(x,0),-\gamma n^{\theta_2-\varepsilon})-
		\Phi^+_{(\psi_f - \overline f \psi_1),2}((x,0),-\gamma n^{\theta_2-\varepsilon})=O((\gamma n^{\theta_2-\varepsilon})^{\theta_2+\varepsilon})\lipnorm{\psi_f},		
	\end{equation*}
	since $\lipnorm{\psi_f}\le\lipnorm{\psi_f-\overline f\psi_1}=\lipnorm{\check f}$, where $\check f(x)=f(x)-\overline f$.
\end{proof}

\section{End of proof: approximation of uniform distribution of initial points}
\label{sec:unif-distr}

Lemma~\ref{lem:approx} yields that the distribution of $S_n[f](x)$ with $x$ uniformly distributed on $I$ (below we denote this as $x\sim\Unif(I)$) is close to that of $\Phi^+_{\psi_f,\le 2}(x,n)$ with $x\sim\Unif(I)$. However, Theorem~\ref{thm:buf-limit} deals with the distribution $\Phi^+_{\psi_f,\le 2}(p,n)$, where $p$ is uniformly distributed on the \emph{whole surface}~$M$. To relate these two distribution we consider the intermediate one, that of $\Phi^+_{\psi_f,\le 2}(q,n)$ with $q=h^+_{-\gamma n^{\theta_2-\delta}}(x)$, where $x\sim\Unif(I)$ and $\gamma\sim\Unif[0,1]$ are independent. Then such $q$ will be distributed almost uniformly on $M$.

We proceed to the formal considerations. Recall the following property of Kantoro\-vich---Rubinstein and L\'evy---Prokhorov distances. (Here and below we write $d(\xi,\eta)$ instead of $d(\operatorname{Law}(\xi),\operatorname{Law}(\eta))$ for brevity.)

\begin{proposition}
	1. $d_{KR,LP}(\xi,\xi+\varepsilon)\le |\varepsilon|$.\\
	2a. $d_{KR}(\xi,(1+\varepsilon)\xi)\le |\varepsilon|\mathbb{E}|\xi|$.\\
	2b. $d_{LP}(\xi,(1+\varepsilon)\xi)\le |\varepsilon|^{2/3}(\operatorname{Var}\xi)^{1/3}\biggl(\dfrac{1+|\varepsilon|}{1-|\varepsilon|}\biggr)^{2/3}$. 
\end{proposition}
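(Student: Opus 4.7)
The proposition is a set of standard coupling estimates for the behaviour of distributions under the affine maps $x\mapsto x+\varepsilon$ and $x\mapsto(1+\varepsilon)x$. In each case my plan is to realize both random variables on the same probability space by the natural coupling and convert an almost-sure or tail bound on $|X-Y|$ into the desired bound using the Kantorovich--Rubinstein formula $d_{KR}(\mu,\nu)=\inf_\pi \mathbb{E}|X-Y|$, where the infimum is over couplings of $\mu$ and $\nu$, together with Strassen's coupling characterization of L\'evy--Prokhorov, namely that $d_{LP}(\mu,\nu)\le \delta$ iff some coupling satisfies $\mathbb{P}(|X-Y|>\delta)\le \delta$.

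Parts~1 and 2a are immediate from this scheme. For part~1 I would use the identity coupling $(X,Y)=(\xi,\xi+\varepsilon)$, for which $|X-Y|=|\varepsilon|$ pointwise; this yields $d_{KR}\le|\varepsilon|$ from Kantorovich--Rubinstein and $d_{LP}\le|\varepsilon|$ from Strassen, since the event $\{|X-Y|>\delta\}$ is empty whenever $\delta\ge|\varepsilon|$. For part~2a I would use the coupling $(X,Y)=(\xi,(1+\varepsilon)\xi)$, so that $|X-Y|=|\varepsilon||\xi|$ and hence $\mathbb{E}|X-Y|=|\varepsilon|\mathbb{E}|\xi|$, producing the claimed Kantorovich--Rubinstein bound.

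Part~2b is the main technical step. Using the same coupling $(\xi,(1+\varepsilon)\xi)$, Strassen's theorem reduces the claim to finding the smallest $\delta>0$ satisfying $\mathbb{P}(|\xi|>\delta/|\varepsilon|)\le \delta$. For this I would apply Chebyshev's inequality centred at $\mathbb{E}\xi$, exploiting the translation invariance of $d_{LP}$ from part~1 to absorb the mean into a constant shift so that only $\operatorname{Var}\xi$, rather than $\mathbb{E}\xi^2$, enters the final estimate. Balancing the resulting inequality $\varepsilon^2\operatorname{Var}\xi/\delta^2\asymp\delta$ gives the dominant exponent $\delta\asymp|\varepsilon|^{2/3}(\operatorname{Var}\xi)^{1/3}$, and the correction factor $\left(\frac{1+|\varepsilon|}{1-|\varepsilon|}\right)^{2/3}$ arises because the tail condition $\mathbb{P}(|X-Y|>\delta)\le\delta$ can be written in terms of either $|\xi|$ or $|(1+\varepsilon)\xi|$, and both versions of Chebyshev must hold simultaneously in order to verify the two directions $\mu(A)\le\nu(A^\delta)+\delta$ and $\nu(A)\le\mu(A^\delta)+\delta$ of the L\'evy--Prokhorov definition.

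I expect the main obstacle to be the final step of part~2b: it is purely computational, but one must track the constants carefully through the centering and the bidirectional L\'evy--Prokhorov inequality to obtain the precise form stated, rather than a weaker bound such as $|\varepsilon|^{2/3}(\mathbb{E}\xi^2)^{1/3}$ that would follow from a direct application of Markov's inequality to $|\varepsilon\xi|^2$.
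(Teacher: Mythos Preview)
Your approach for parts 1 and 2a is identical to the paper's: both use the natural coupling $(\xi,\xi+\varepsilon)$, resp.\ $(\xi,(1+\varepsilon)\xi)$, together with the transport/Kantorovich formula and the trivial $d_{LP}$ bound.

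For part 2b the paper does not invoke Strassen's theorem but works directly with the set definition of $d_{LP}$: for an arbitrary Borel set $A$ it splits $A=A_{\le\mu}\cup A_{>\mu}$, handles $A_{\le\mu}$ by the pointwise bound $|\xi-(1+\varepsilon)\xi|\le|\varepsilon|\mu$, bounds the tail part by Chebyshev, and then optimizes over $\mu$. Your Strassen route is a legitimate shortcut and in fact gives a \emph{stronger} inequality: from the single symmetric condition $\mathbb{P}(|\varepsilon\xi|>\delta)\le\delta$ and Chebyshev one gets $\delta=|\varepsilon|^{2/3}(\Var\xi)^{1/3}$ \emph{without} the factor $\bigl((1+|\varepsilon|)/(1-|\varepsilon|)\bigr)^{2/3}$. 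So your explanation that this factor ``arises because \ldots\ both versions of Chebyshev must hold simultaneously in order to verify the two directions'' is misplaced---with Strassen there is nothing bidirectional to verify. The factor appears in the paper's argument only because the direct set-wise verification of $\nu(A)\le\mu(A^\delta)+\delta$ forces one to control $|\xi|$ knowing $|(1+\varepsilon)\xi|\le\mu$, which costs the factor $1/(1-|\varepsilon|)$, and the tail of $(1+\varepsilon)\xi$ contributes the $(1+|\varepsilon|)^2$.

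One genuine wrinkle you should be aware of: your plan to ``absorb the mean into a constant shift'' via part~1 does not literally work, since translating $\xi$ by $-\mathbb{E}\xi$ translates $(1+\varepsilon)\xi$ by $-(1+\varepsilon)\mathbb{E}\xi$, leaving an extra $|\varepsilon\,\mathbb{E}\xi|$ term after the triangle inequality. The paper sidesteps this by simply writing $\mathbb{P}(|\xi|>\mu)\le\Var\xi/\mu^2$, which is only Chebyshev if $\mathbb{E}\xi=0$; in other words the paper tacitly assumes the centred case (and indeed the proposition as stated fails for nonzero constants $\xi\equiv c$). All applications in the paper are to normalized variables $\widehat\eta_k$ with mean zero, so this is harmless in context---just don't expect your translation trick to recover the stated bound for general $\xi$.
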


\begin{proof}
The first item is clear. To prove item 2a we use ``mass transportation'' interpretation and shift each piece of mass $dm$ from a point $x$ to the point $(1+\varepsilon)x$ yielding work equal to $|\varepsilon x|\,dm$ and its integral over the whole mass is $|\varepsilon|\mathbb{E}|\xi|$.
The last item is proved as follows. Take any Borel set $A\subset\mathbb R$ and split it into $A_{\le\mu}=\{x\in A, |x|\le\mu\}$ and $A_{>\mu}=A\setminus A_{\le\mu}$, with the parameter $\mu$ being chosen below.
Then if $\xi\in A_{\le\mu}$ then $(1+\varepsilon)\xi\in A^\delta_{\le\mu}$ assuming $\delta\le |\varepsilon|\mu$. Similarly,
 $(1+\varepsilon)\xi\in A_{\le\mu}$ yields $\xi\in A^\delta_{\le\mu}$ if $\delta\le |\varepsilon|\mu/(1-|\varepsilon|)$.
Further,
\begin{equation*}
	\mathbb{P}(\xi\in A_{>\mu})\le \frac{\Var\xi}{\mu^2},\qquad
	\mathbb{P}((1+\varepsilon)\xi\in A_{>\mu})\le \frac{(1+|\varepsilon|)^2\Var\xi}{\mu^2}.
\end{equation*}  
Therefore, $d_{LP}(\xi,(1+\varepsilon)\xi)\le\delta$ if
\begin{equation*}
	\frac{(1+|\varepsilon|)^2\Var\xi}{\mu^2}\le\delta,\qquad  \delta\le\frac{|\varepsilon|\mu}{(1-|\varepsilon|)},
\end{equation*}
and these conditions are satisfied with $\delta$ equal to the right-hand side of the inequality in item 2b and $\mu=\delta(1-|\varepsilon|)/|\varepsilon|$.
\end{proof}

\begin{proposition}\label{prop:appr-dens}
	Let $q$ be a random point on the surface $M$ with
	\begin{equation*}
		q=h^+_{-\gamma T}(x),
	\end{equation*}
	where $x\sim\Unif(I)$ and $\gamma\sim\Unif[0,1]$ are independent.
	Then the distribution of $q$ has density $\rho_T$ with respect to the uniform measure on $M$ and we have
	\begin{equation*}
		|\rho_T-1|=O(T^{\theta_2+\varepsilon-1})\text{ for any }\varepsilon>0.
	\end{equation*} 
\end{proposition}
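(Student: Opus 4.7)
The plan is to compute $\rho_T$ via an explicit change of variables, identify the resulting expression with an ergodic integral up to $O(1)$, and then apply Bufetov's truncated approximation~\eqref{eq:buf-asymp-r} with $r=2$.

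First I would introduce $\Psi\colon I\times[0,T]\to M$ defined by $\Psi(x,s)=h^+_{-s}(x)$, so that $q=\Psi(x,\gamma T)$. Since $(x,\gamma T)$ has joint density $1/T$ on $I\times[0,T]$ and the vertical flow preserves the Lebesgue measure $dx\,dy$, the pushforward of this density at $p\in M$ equals $\rho_T(p)=N_T(p)/T$, where
\begin{equation*}
N_T(p)=\#\{s\in[0,T]:h^+_s(p)\in I\}
\end{equation*}
counts the forward returns of $p$ to the transversal $I$ within time~$T$. Next I would observe that $|N_T(p)-I_T\psi_1(p)|\le 1$: since $\psi_1=1/h(x)$ is constant on each vertical fiber of a Markov rectangle, the integral $\int_0^T\psi_1(h_s^+(p))\,ds$ accumulates exactly~$1$ over each complete vertical pass through a rectangle, plus fractional contributions lying in $[0,1]$ from the initial and final partial passes.

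It then remains to apply~\eqref{eq:buf-asymp-r} with $r=2$ to $\psi_1$: the leading Oseledets component is $\Phi^+_{\psi_1,1}(p,T)=\overline{\psi_1}\cdot T=T$, and the second component satisfies $\Phi^+_{\psi_1,2}(p,T)=O(T^{\theta_2+\varepsilon})$ uniformly in $p$, by the Oseledets growth of the second subspace combined with the H\"older property of Bufetov cocycles and the Teichm\"uller scaling~\eqref{eq:teich-scale}. Combining these gives $N_T(p)=T+O(T^{\theta_2+\varepsilon})$, which yields the claimed bound $|\rho_T(p)-1|=O(T^{\theta_2+\varepsilon-1})$.

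The step I expect to require the most care is the change-of-variables computation identifying the density as $N_T(p)/T$: it rests on a clean accounting of how $\Psi$ covers $M$ with multiplicity $N_T(p)$ at $p$, and on checking that the measure-zero set of orbits hitting singularities before time~$T$ is harmless. Once this is in place, the bridge $N_T(p)=I_T\psi_1(p)+O(1)$ and Bufetov's approximation make the rest of the proof routine.
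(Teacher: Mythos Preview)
Your proof is correct and follows essentially the same route as the paper: identify the density as $N_T(p)/T$, bridge $N_T$ to $I_T\psi_1$ with an $O(1)$ error, and then invoke Bufetov's approximation. The only difference is that the paper applies~\eqref{eq:buf-asymp-r} with $r=1$, which already yields $|I_T\psi_1(p)-T|=O(T^{\theta_2+\varepsilon})$ in a single step; your choice $r=2$ works too but requires the extra (true, and used elsewhere in the paper) observation that $\Phi^+_{\psi_1,2}(p,T)=O(T^{\theta_2+\varepsilon})$, so it is slightly less direct.
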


\begin{proof}
	Considering a small rectangle near a point $(x,y)\in M$ one can see that the density $\rho_T(x,y)$ equals $N_T(x,y)/T$, where $N_T(x,y)$ is the number of intersections of the arc segment $h^+_{[0,T]}((x,y))$ with the segment~$I$. Hence we have
	\begin{equation*}
		\biggl|N_T(x,y)-\int_0^T \psi_1(h^+_t((x,y)))\,dt\biggr|\le 1,
	\end{equation*}
	as if we split the integral by the times when the arc crosses $I$, every integral over the internal interval of the partition equals~$1$, and the value of the integral over the first or the last interval belongs to $[0,1]$.
	Now~\eqref{eq:buf-asymp-r} with $r=1$ and $\varphi=\psi_1$ concludes the proof.
\end{proof}

Withoul loss of generality we may assume that $\overline f=0$. Consider the following random variables:
\begin{equation*}
{\arraycolsep=0pt\relax
\begin{array}{ll}
\eta_0=\Phi^+_{\psi_f,2}(p,n),& p\sim\Unif(M),\\[2pt]
\begin{array}{l}
\eta_1=\Phi^+_{\psi_f,2}(h^+_{-\gamma n^{\theta_2-\delta}}(x),n),\hspace*{1em}\\
\eta_2=S_nf(x),
\end{array}
\biggr\}\hspace*{0.5em}& 
\begin{array}{l}
x\sim\Unif(I)\text{ and }\gamma\sim\Unif[0,1]\\
\text{are independent.}
\end{array}
\end{array}}
\end{equation*}
Note that $\eta_2$ does not depend on $\gamma$ but it is convenient to regard $\eta_1$ and $\eta_2$ as random variables on the same probability space. 
Let us also denote
\begin{equation*}
	\widehat{\eta}_k=\frac{\eta_k-\mathbb{E}\eta_k}{\sqrt{\Var\eta_k}}.
\end{equation*}

\begin{lemma}\label{lem:dist-01}For any $\beta>0$ we have
	\begin{align*}
	d_{KR}(\widehat{\eta}_0,\widehat{\eta}_1)&=O(n^{\theta_2^2-\theta_2+\beta}),\\
	d_{LP}(\widehat{\eta}_0,\widehat{\eta}_1)&=O(n^{\frac23\bigl(\theta_2^2-\theta_2\bigr)+\beta}),\\
	\end{align*}
\end{lemma}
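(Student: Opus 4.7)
The plan is to exploit Proposition~\ref{prop:appr-dens} together with a uniform bound on the cocycle $F(p) := \Phi^+_{\psi_f,2}(p,n)$, viewed as a function on $M$, and then propagate estimates through the affine renormalization $x \mapsto (x-\mathbb E)/\sqrt{\Var}$.

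First I would take $T = n^{\theta_2-\delta}$, so that Proposition~\ref{prop:appr-dens} gives $\|\rho_T-1\|_\infty = O(n^{\theta_2^2-\theta_2+\beta'})$ for arbitrarily small $\beta'>0$; in particular the measures $\mu_0 = \Unif(M)$ and $\mu_q$ (the law of $q = h^+_{-\gamma T}(x)$) are close in total variation with the same bound. Next, subtracting the truncated Bufetov approximations \eqref{eq:buf-asymp-r} for $r=1$ and $r=2$ yields the pointwise bound $\|F\|_\infty = O(n^{\theta_2+\varepsilon})$. Since $\eta_0$ and $\eta_1$ are pushforwards of $F$ under $\mu_0$ and $\mu_q$ respectively, I obtain in one stroke $d_{LP}(\eta_0,\eta_1) \le d_{TV}(\mu_0,\mu_q) = O(n^{\theta_2^2-\theta_2+\beta'})$, $d_{KR}(\eta_0,\eta_1) \le 2\|F\|_\infty\, d_{TV}(\mu_0,\mu_q) = O(n^{\theta_2^2+\beta'})$, $|\mathbb E\eta_0-\mathbb E\eta_1| = O(n^{\theta_2^2+\beta'})$, and $|\Var\eta_0-\Var\eta_1| = O(n^{\theta_2^2+\theta_2+\beta'})$.

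For the normalized quantities to inherit the advertised exponents I also need the matching lower bound $\sigma_0^2 := \Var\eta_0 \asymp n^{2\theta_2}$. Via the Teichm\"uller scaling~\eqref{eq:teich-scale} one rewrites $F(p) = (\mathbf{g}^{\log n}\Phi^+_{\psi_f,2})(\bar p, 1)$, where $\bar p$ is the image of $p$ in $\mathbf{g}^{\log n}M$; the cocycle $\mathbf{g}^{\log n}\Phi^+_{\psi_f,2} \in \mathfrak B^+_2(\mathbf{g}^{\log n}M)$ has Oseledets norm of order $n^{\theta_2}$, and on a full $\mathbb P$-measure set of $M$ this norm controls, up to subpolynomial factors, the variance of its time-$1$ evaluation against the uniform measure on the rescaled surface. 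I expect this quantitative nondegeneracy to be the main obstacle of the proof: it requires an effective version of the statement ``a nonzero Oseledets cocycle in $\mathfrak B^+_2$ has nonzero fiberwise $L^2$-norm,'' uniform as $M$ varies in $(\mathcal H,\mathbb P)$.

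Once $\sigma_0 \asymp n^{\theta_2}$ is in hand, I introduce the intermediate variable $\tilde\eta_1 := (\eta_1 - \mathbb E\eta_0)/\sigma_0$. Since $\widehat\eta_0$ and $\tilde\eta_1$ are the same affine image of $\eta_0$ and $\eta_1$, one has $d_{TV}(\widehat\eta_0,\tilde\eta_1) = d_{TV}(\eta_0,\eta_1)$ and $d_{KR}(\widehat\eta_0,\tilde\eta_1) = d_{KR}(\eta_0,\eta_1)/\sigma_0$, both of order $n^{\theta_2^2-\theta_2+\beta'}$. Writing $\widehat\eta_1 = (1+\lambda)\tilde\eta_1 + c$ with $|\lambda|,|c| = O(n^{\theta_2^2-\theta_2+\beta'})$ (using the variance and mean estimates above together with $\sigma_0 \asymp n^{\theta_2}$), items~1 and~2a of the preceding proposition give $d_{KR}(\tilde\eta_1,\widehat\eta_1) = O(n^{\theta_2^2-\theta_2+\beta'})$, while items~1 and~2b give $d_{LP}(\tilde\eta_1,\widehat\eta_1) = O(n^{\frac{2}{3}(\theta_2^2-\theta_2)+\beta'})$ (using $\Var\tilde\eta_1 = O(1)$). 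The triangle inequality then yields the two bounds of the lemma, the weaker LP exponent arising precisely from the $|\lambda|^{2/3}$ factor in item~2b.
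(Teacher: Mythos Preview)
Your outline is essentially the paper's: set $T=n^{\theta_2-\delta}$, introduce the intermediate $\tilde\eta_1=(\eta_1-\mathbb E\eta_0)/\sqrt{\Var\eta_0}$, compare $\widehat\eta_0$ with $\tilde\eta_1$ via Proposition~\ref{prop:appr-dens}, then pass from $\tilde\eta_1$ to $\widehat\eta_1$ using items~1, 2a, 2b of the preceding proposition. The difference lies in how the first comparison is carried out, and this difference is exactly what creates the obstacle you flagged.

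You bound $d_{KR}(\eta_0,\eta_1)$ by $\|F\|_\infty\cdot d_{TV}(\mu_0,\mu_q)$ and then divide by $\sigma_0$; likewise your estimates for $|\lambda|$ and $|c|$ go through $|\Var\eta_0-\Var\eta_1|$ and $|\mathbb E\eta_0-\mathbb E\eta_1|$ and again require dividing by $\sigma_0^2$ or $\sigma_1$. All of this forces you to establish $\sigma_0\gtrsim n^{\theta_2-\varepsilon}$, which (as you say) is a nontrivial quantitative nondegeneracy statement about the fiberwise $L^2$-norm of a nonzero Oseledets cocycle in $\mathfrak B^+_2$.

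The paper's argument sidesteps this entirely. Instead of first comparing the un-normalized $\eta_0,\eta_1$ and then dividing, it normalizes first and works only with $\widehat\eta_0$ and $\tilde\eta_1$: these are the \emph{same function} on $M$ integrated against $\Unif(M)$ and the density $\rho_T$. Since $\widehat\eta_0$ already has $\mathbb E|\widehat\eta_0|\le 1$ and $\mathbb E\widehat\eta_0^2=1$, one gets directly
\[
\int |x|\,|\rho(x)-1|\,dF_{\widehat\eta_0}\le \mathbb E|\widehat\eta_0|\cdot O(T^{\theta_2-1+\varepsilon})=O(T^{\theta_2-1+\varepsilon}),
\]
and similarly for second moments. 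This yields $|\mathbb E\tilde\eta_1|=O(T^{\theta_2-1+\varepsilon})$ and $\Var\tilde\eta_1=1+O(T^{\theta_2-1+\varepsilon})$ with no $\sigma_0$ ever appearing. The $d_{KR}$ and $d_{LP}$ bounds between $\widehat\eta_0$ and $\tilde\eta_1$ follow from the same density estimate, and the final step $\tilde\eta_1\to\widehat\eta_1$ is exactly as you describe. So for this lemma the variance lower bound is not needed at all; your route is correct but takes on an unnecessary burden, while the paper's normalize-first trick (using $\mathbb E|\widehat\eta_0|\le 1$ in place of $\|F\|_\infty$) is both shorter and self-contained.
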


\begin{proof}
	In the proof we denote $T=n^{\theta_2-\delta}$. Then the lemma statement takes the form
	\begin{align*}
	d_{KR}(\widehat{\eta}_0,\widehat{\eta}_1)&=O(T^{\theta_2-1+\varepsilon}),\\
	d_{LP}(\widehat{\eta}_0,\widehat{\eta}_1)&=O(n^{\frac23\bigl(\theta_2-1\bigr)+\varepsilon})
	\end{align*}
	for any $\varepsilon>0$.
	
	1. Consider the following random variable
	\begin{equation*}
	\widetilde{\eta}_1=\frac{\eta_1-\mathbb{E}\eta_0}{\sqrt{\Var\eta_0}}
	\end{equation*}
	Then $\widehat{\eta}_0$ and $\widetilde{\eta}_1$ can be regarded as the same function on $M$ but with different measures on $M$, namely, the measures given in Proposition~\ref{prop:appr-dens}. Therefore the distributions of
	$\widehat{\eta}_0$ and $\widetilde{\eta}_1$ are equivalent: $dF_{\widetilde{\eta}_1}=\rho\,dF_{\widehat{\eta}_0}$, and $\rho=1+O(T^{\theta_2-\varepsilon})$.
	Hence
	\begin{equation*}
		|\mathbb E\widetilde{\eta}_1|=|\mathbb E\widetilde{\eta}_1-\mathbb E\widehat{\eta}_0|\le \int |x|\, |\rho(x)-1|\,dF_{\widehat{\eta}_0}\le
		\mathbb E|\widehat{\eta}_0|\cdot O(T^{\theta_2-1+\varepsilon})=O(T^{\theta_2-1+\varepsilon}),
	\end{equation*}
	since $\mathbb E|\widehat{\eta}_0|\le \sqrt{\Var\widehat{\eta}_0}=1$.
	Similarly,
\begin{equation*}
|\mathbb E\widetilde{\eta}_1^2-\mathbb E\widehat{\eta}_0^2|\le \int x^2\, |\rho(x)-1|\,dF_{\widehat{\eta}_0}\le
\mathbb E\widehat{\eta}_0^2\cdot O(T^{\theta_2-1+\varepsilon})=O(T^{\theta_2-1+\varepsilon}),
\end{equation*}
	and thus $\Var\widetilde{\eta}_1=1+O(T^{\theta_2-1+\varepsilon})$.
	
	Further, let us estimate the distances between $\widehat{\eta}_0$ and $\widetilde{\eta}_1$. For L\`evy---Prokhorov distance observe that for any Borel set $A$
	\begin{equation*}
	\frac{\mathbb P(\widehat{\eta}_0\in A)}{\mathbb P(\widetilde{\eta}_1\in A)}=1+O(T^{\theta_2-1+\varepsilon}).
	\end{equation*}
	Hence $|\mathbb P(\widehat{\eta}_0\in A)-\mathbb P(\widetilde{\eta}_1\in A)|=O(T^{\theta_2-1+\varepsilon})$, thus $d_{LP}(\widehat{\eta}_0,\widetilde{\eta}_1)=O(T^{\theta_2-1+\varepsilon})$. For Kantorovich---Rubinstein distance we can (in mass transportation interpretation) move all excessive mass in $\operatorname{Law}(\widehat{\eta}_0)$ to the origin and then back to the points where 
	$\operatorname{Law}(\widetilde{\eta}_1)$ has excessive mass. The total work for this transportation is
	\begin{equation*}
		\int |x|\, |\rho(x)-1|\,dF_{\widehat{\eta}_0}(x)=O(T^{\theta_2+\varepsilon-1}).
	\end{equation*}
	It remains to estimate the distances $d(\widetilde{\eta}_1,\widehat{\eta}_1)$. We use the identity
	\begin{equation*}
	\widehat{\eta}_1=\frac{\widetilde{\eta}_1-\mathbb{E}\widetilde{\eta}_1}{\sqrt{\Var\widetilde{\eta}_1}}
	\end{equation*}
	and the estimates
	\begin{equation*}
		|\mathbb E\widetilde{\eta}_1|=O(T^{\theta_2-1+\varepsilon}),\quad
		\Var \widetilde{\eta}_1=1+O(T^{\theta_2-1+\varepsilon})		
	\end{equation*}
	obtained above. This yields
	\begin{equation}\label{eq:d-tilde-hat}
	\begin{aligned}
	d_{KR}(\widetilde{\eta}_1,\widehat{\eta}_1)&=O(T^{\theta_2-1+\varepsilon}),\\
	d_{LP}(\widetilde{\eta}_1,\widehat{\eta}_1)&=O(T^{\frac23\bigl(\theta_2-1\bigr)+\varepsilon}).\qedhere
	\end{aligned}
	\end{equation}
\end{proof}

\begin{lemma}\label{lem:dist-12}
	For any $\beta>0$ we have
	\begin{align*}
	d_{KR}(\widehat{\eta}_1,\widehat{\eta}_2)&=O(n^{\theta_2^2-\theta_2+\beta}),\\
	d_{LP}(\widehat{\eta}_1,\widehat{\eta}_2)&=O(n^{\frac23\bigl(\theta_2^2-\theta_2\bigr)+\beta}).
	\end{align*}
\end{lemma}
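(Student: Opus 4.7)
My plan is to leverage the pathwise closeness of $\eta_1$ and $\eta_2$ furnished by Corollary~\ref{cor:approx}, and then renormalize carefully using the Kantorovich--Rubinstein/L\'evy--Prokhorov distance comparison proposition at the start of this section. Concretely, applying Corollary~\ref{cor:approx} under the standing assumption $\overline f = 0$ produces, for any $\delta' > 0$, a deterministic bound
\[
|\eta_2 - \eta_1| \le E_n := O(n^{\theta_2^2 + \delta'})\lipnorm{\psi_f},
\]
which holds almost surely with the implicit constant uniform in $x \in I$ and $\gamma \in [0,1]$.

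From this pathwise estimate I would first derive $|\mathbb{E}\eta_2 - \mathbb{E}\eta_1| \le E_n$ and $|\sqrt{\Var \eta_2} - \sqrt{\Var \eta_1}| = O(E_n)$ by routine manipulation (using $|\Var X - \Var Y| \le \mathbb{E}|(X-Y)(X+Y)| + |\mathbb{E}X - \mathbb{E}Y|\,\mathbb{E}|X+Y|$ and $|\sqrt a - \sqrt b|(\sqrt a + \sqrt b) = |a-b|$). Then I would expand
\[
\widehat{\eta}_2 = (1+\alpha)\,\widehat{\eta}_1 + \beta,
\]
where $\alpha = \sqrt{\Var \eta_1/\Var \eta_2} - 1$ and $\beta$ is a pathwise residual, with both $|\alpha|$ and $\|\beta\|_\infty$ of order $E_n/\sqrt{\Var \eta_1}$. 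The distance comparison proposition then converts this decomposition into distance estimates: item~2a gives $d_{KR}(\widehat{\eta}_1, (1+\alpha)\widehat{\eta}_1) \le |\alpha|$ (using $\mathbb{E}|\widehat{\eta}_1| \le 1$), item~2b gives $d_{LP}(\widehat{\eta}_1, (1+\alpha)\widehat{\eta}_1) = O(|\alpha|^{2/3})$, and item~1 handles the additive pathwise shift by $\beta$ in either metric via the trivial coupling.

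To translate these absolute rates into the claimed ones I need the variance lower bound $\sqrt{\Var \eta_1} \gtrsim n^{\theta_2 - o(1)}$. The internal computation inside the proof of Lemma~\ref{lem:dist-01} already produced $\Var \eta_1 = \Var \eta_0 \cdot (1 + o(1))$, so it suffices to bound $\Var \eta_0$ below. For this I would invoke the Teichm\"uller scaling~\eqref{eq:teich-scale}, rewriting $\eta_0 = (\mathbf{g}^{\log n}_2 \Phi^+_{\psi_f, 2})(\mathbf{g}^{\log n} p, 1)$ with $\mathbf{g}^{\log n} p$ uniform on $\mathbf{g}^{\log n} M$, and combine it with the Oseledets--Pesin growth $\|\mathbf{g}^s_2 \Phi^+_{\psi_f, 2}\| \asymp e^{s\theta_2}$ on the second Oseledets subspace $\mathfrak{B}^+_2$ and the standing hypothesis $\Phi^+_{\psi_f, 2} \ne 0$, yielding $\sqrt{\Var \eta_0} \asymp n^{\theta_2}$ up to sub-polynomial corrections.

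The hardest step is precisely this variance lower bound: the Oseledets growth rate must hold along the specific orbit $\{\mathbf{g}^s M\}$ rather than only generically on $\mathcal{H}$, and the relevant norm on $\mathfrak{B}^+_2$ must be comparable to (or at least bound from below) the variance of the cocycle evaluated on unit arcs. Once that is in hand, substituting $E_n/\sqrt{\Var \eta_1} = O(n^{\theta_2^2 - \theta_2 + \delta''})$ into the two distance bounds above and absorbing the sub-polynomial factors into the arbitrary $\beta > 0$ of the statement immediately delivers the claimed rates $O(n^{\theta_2^2 - \theta_2 + \beta})$ for $d_{KR}$ and $O(n^{\frac{2}{3}(\theta_2^2 - \theta_2) + \beta})$ for $d_{LP}$.
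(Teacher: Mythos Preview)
Your approach is essentially the same as the paper's. The paper introduces the intermediate quantity $\widetilde{\eta}_2=(\eta_2-\mathbb{E}\eta_1)/\sqrt{\Var\eta_1}$ and bounds $|\widetilde{\eta}_2-\widehat{\eta}_1|$ pathwise via Corollary~\ref{cor:approx}, then passes from $\widetilde{\eta}_2$ to $\widehat{\eta}_2$ using the distance proposition; your direct decomposition $\widehat{\eta}_2=(1+\alpha)\widehat{\eta}_1+\beta$ is just a reparametrization of the same two steps. One point worth noting: the variance lower bound $\sqrt{\Var\eta_1}\gtrsim n^{\theta_2-o(1)}$ that you flag as the hardest step is used by the paper without comment (it is implicit in the line $|\eta_1-\eta_2|/\sqrt{\Var\eta_1}=O(n^{\theta_2^2-\theta_2+\delta})$), so your explicit discussion of it via Teichm\"uller scaling and Oseledets growth is, if anything, more careful than the original.
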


\begin{proof}
	As in the previous lemma, consider
	\begin{equation*}
		\widetilde{\eta}_2=\frac{\eta_2-\mathbb{E}\eta_1}{\sqrt{\Var\eta_1}}
	\end{equation*}
	Then Corollary~\ref{cor:approx} yields
	\begin{equation*}
	|\widetilde{\eta}_2-\widehat{\eta}_1|\le \frac{|\eta_1-\eta_2|}{\sqrt{\Var\eta_1}}=O(n^{\theta_2^2-\theta_2+\delta}),
	\end{equation*}
	hence
	\begin{equation*}
	d_{KR,LP}(\widetilde{\eta}_2,\widehat{\eta}_1)=O(n^{\theta_2^2-\theta_2+\delta},
	\end{equation*}
	since for $d_{KR}$ we move every piece of mass by the distance at most $\alpha=\sup|\widetilde{\eta}_2-\widehat{\eta}_1|$, and for $d_{LP}$ we see that $\widehat{\eta}_1\in A$ implies $\widetilde{\eta}_2\in A^\alpha$ and vice versa.
	Further, 
	\begin{gather*}
	|\mathbb E\widetilde{\eta}_2|\le |\widehat{\eta}_1|+
	\mathbb E|\widetilde{\eta}_2-\widehat{\eta}_1|=O(n^{\theta_2^2-\theta_2+\delta}),\\
	|\mathbb E\widetilde{\eta}_2^2-\mathbb E\widehat{\eta}_1^2|\le
	2 \mathbb E|\widehat{\eta}_1|\cdot\alpha+\alpha^2=O(n^{\theta_2^2-\theta_2+\delta}),
	\end{gather*}
whence $\Var\widetilde{\eta}_2=1+O(n^{\theta_2^2-\theta_2+\delta})$.
As in the previous lemma, these estimates yields the same formulas as~\eqref{eq:d-tilde-hat} for $d(\widetilde{\eta}_2,\widehat{\eta}_2)$, and this concludes the proof.
\end{proof}

Theorem 3.1 now follows from Bufetov's limit theorem and Lemmas~\ref{lem:dist-01} and \ref{lem:dist-12}.

\subsubsection*{Acknowledgments} The work was partially supported by RFBR--CNRS grant No.~18-51-15010 and RFBR grant No.~18-31-20031. The author deeply thanks Alexander Bufetov for fruitful discussions on this matter.

\end{document}